\numberwithin{equation}{section}
\DeclareMathOperator{\ev}{ev} 
\newcommand{\bc}{\begin{center}}
\newcommand{\ec}{\end{center}}
\newcommand{\ba}{\begin{array}}
\newcommand{\ea}{\end{array}}
\renewcommand{\dfrac}{\displaystyle\frac }
\newtheorem{thm}{Theorem\ }[section]
\newtheorem{rem}{ Remark\ }[section]
\newtheorem{cor}{Corollary }[section]
\newtheorem{lem}{ Lemma\ }[section]
\numberwithin{equation}{section}
\numberwithin{thm}{section}
\numberwithin{defn}{section}
\numberwithin{lem}{section}
\numberwithin{cor}{section}
\numberwithin{rem}{section}
\def\cM{{\mathcal M}}
\def\bF{{\mathbf F}}
\def\bF{{\mathbf F}}
\def\ev{{\mathbf e}{\mathbf v}}
\def\ZZ{{\mathbb Z}}
\def\QQ{{\mathbb Q}}
\def\RR{{\mathbb R}}
\def\CC{{\mathbb C}}
\def\begeq{\begin{equation}}
\def\endeq{\end{equation}}
\title{Asypmtotics of enumerative invariants in $\CC P^2$}
\author{Gang Tian\thanks{Supported partially by
grants from NSF and NSFC}
\\Peking University and Princeton University\\
Dongyi Wei    \\
School of Math Sciences and BICMR, Peking University.}
\date{}
\begin{document}

\maketitle

\tableofcontents
\begin{abstract}
In this paper, we give the asymptotic expansion of $n_{0,d}$ and $n_{1,d}$.
\end{abstract}
\section{Introduction}
The GW-invariants were constructed first by Ruan-Tian for semi-positive symplectic manifolds (see \cite{ruantian1}, \cite{ruantian2}, \cite{mcduffsalamon}) and subsequently for general symplectic manifolds (see \cite{litian}, \cite{fukayaono} etc.). For any compact symplectic manifold $M$ of dimension $2n$, its GW-invariants are given by a family of the following multi-linear maps:
\begin{equation}\label{eq:gw-1}
\Phi^M_{g,a,k}: H^*(\cM_{g,k},\QQ)\times H^*(M, \QQ)^k\,\mapsto \, \QQ,
\end{equation}
where $a\in H_2(M,\ZZ)$ and $\cM_{g,k}$ denotes the moduli of stable curves of genus $g$ and with $k$ marked points. For any $\beta\in H^*(\cM_{g,k}, \QQ)$
and $\alpha_1,\cdots, \alpha_k\in H^*(M,\QQ)$, we have
$$\Phi^M_{g,a,k}(\beta;\alpha_1,\cdots, \alpha_k)\,=\,\int_{\cM^{vir}(g,a,k)}\,\ev^*(\beta\wedge\alpha_1\wedge\cdots\wedge \alpha_k) ,$$
where $\cM^{vir}(g,a,k)$ is the virtual moduli of stable maps of genus $g$, homology class $a$ and $k$ marked points and $\ev$ is the evaluation map from stable maps
to $\cM_{g,k}\times M^k$.
As usual, we consider the generating function
\begin{equation}\label{eq:gw-2}
\bF_g^M(w)\,=\,\sum_{a, k}\,\frac{\Phi^M_{g,a,k} (1; w,\cdots,w)}{k!},~~~w\,\in\,H^*(M,\CC).
\end{equation}
For simplicity, we restrict $\bF^M_g$ to the even part $H_{ev}(M)$ of $H^*(M,\CC)$. Let $\gamma_0, \cdots, \gamma_L$ be an integral basis for $H_{ev}(M)$ such that each $\gamma_i$ is of pure degree $2 d_i$ and $0=d_0<1=d_1\le \cdots\le d_L=n$. Write $w = \sum t_i \gamma_i$, then the restriction of $\bF_g$ is of the form
$$\bF_g^M(t_0,\cdots, t_L)\,=\, \sum_{a,k} \sum_{k_0+\cdots+k_L = k}\,\psi_{g,a}(k_0,\cdots, k_L) \,t_1^{k_1}\cdots t_L^{k_L},$$
where $\psi_{g,a}(k_0,\cdots,k_L)$ denotes the evaluation of $\Phi^M_{g,a,k}$ at $k_0 $ of $\gamma_0$, ......, $k_L$ of $\gamma_L$ divided by $k_0!\cdots k_L!$.
We observe that $\psi_{g,a}(k_0,\cdots,k_L)=0$ unless
\begin{equation}\label{eq:gw-3}
c_1(M)(a) \,+\, (3-n)(g-1)\, = \, \sum_{i=0}^L \,k_i\,(d_i - 1).
\end{equation}
We further observe that $\psi_{g,a}(k_0,\cdots,k_L)=0$ whenever $k_0 >0$ and $a\not= 0$ and
$$\psi_{g,a}(0,k_1,\cdots,k_{m-1},k_m,\cdots,k_L)\,=\,\left(\prod_{i=1}^{m-1} \,(\gamma_i(a))^{k_i}\right)\,\psi_{g,a}(0,\cdots,0,k_m,\cdots, k_L),$$
where $m\ge 2$ is chosen such that $d_i=1$ for $1\le i\le m-1$ and $d_i \ge 2$ for any $i\ge m$.

The problem we concern is on the asymptotic of $\psi_{g,a}$ as $ c_1(M)(a)\mapsto +\infty$.
More precisely, let $k_{m}(\ell), \cdots, k_L(\ell)$ satisfy:
\eqref{eq:gw-3} holds with $a$ replaced by $\ell a$ and for any $N > 0$ and $i\ge m$, $k_i(\ell)=\ell \sum_{j=0}^N c_{ij} \ell^{-j} + o(\ell^{-N})$ as $\ell \to \infty$. We expect that there are $d$, $c$ and $b_0,b_1,\cdots$ such that
\begin{equation}\label{eq:gw-4}
\psi_{g,\ell a}(0,0,\cdots,0,k_m(\ell),\cdots,k_L(\ell))\,=\, \ell^d\,e^{-c\,\ell}\,\sum_{i=0}^{N}\,( b_i \, \ell^{-i}\,+\,o(\ell^{-N})).
\end{equation}
A deeper problem is what geometric information is encoded in those coefficients $b_0,b_1,\cdots$, in other words, can one express those $b_i$ in terms of
geometric quantities of $M$.

In this short paper, we study the first problem above on asymptotic in the case of $M=\CC P^2$ and $g=0,1$. Since $H^0(\CC P^2, \ZZ)=H^2(\CC P^2, \ZZ)=H^4(\CC P^2, \ZZ)=\ZZ$,
we can denote $a$ by $d\in \ZZ$ and have only one cohomology class $\gamma_2$ of degree greater than $2$. Thus it suffices to consider
$\psi_{g,d}(0,0,3d-1)$ in the above problem. For simplicity, we denote it by $n_{g,d}$. Note that
$$(3d-1)!\, n_{0,d}\,=\,\psi_{0,d} (0,0,3d-1).$$
It counts the number of rational curves of degree $d$ in $\CC P^2$ through $3d-1$ points in general position.

Our first main result is
\begin{thm}
\label{th:main}
In the case of genus $g=0$, we have the following asymptotic expansion: There exist $ x_0,\ a_k^0\in\RR$ such that $a_3^0>0$ and for any $N \ge 4$,
$$n_{0,d}\,=\,e^{-dx_0}\,\left(\sum_{ k=3}^{N-1}\,a_{k}^0\,d^{-k-1/2}\,+\,O(d^{-N-1/2})\right).$$
\end{thm}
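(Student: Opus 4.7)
My plan is to reformulate Theorem~\ref{th:main} as a singularity-analysis statement about a one-variable generating function and then extract the full expansion by standard transfer theorems. Introduce
$$h(u) \,=\, \sum_{d\ge 1} n_{0,d}\,u^{3d-1},$$
so that $n_{0,d} = [u^{3d-1}]h(u)$. The sought asymptotic $n_{0,d}=e^{-dx_0}(a_3^0 d^{-7/2}+a_4^0 d^{-9/2}+\cdots)$, with its half-integer shifts $d^{-k-1/2}$ starting at $k=3$, is precisely the analytic fingerprint of $h$ having a unique dominant singularity at some $u=r_0\in(0,\infty)$, of algebraic type
$$h(u) = A(u) + B(u)(r_0-u)^{5/2}, \qquad A,B \text{ analytic near } r_0,\ B(r_0)\neq 0,$$
together with analytic continuation into a Delta-domain about $r_0$. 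Under these hypotheses, the Flajolet-Odlyzko transfer theorem converts the coefficient $[u^n]$ into a full asymptotic series in $n^{-k-7/2}r_0^{-n}$, and substituting $n=3d-1$ together with $x_0=3\log r_0$ reproduces exactly the form stated in the theorem. The positivity $a_3^0>0$ will then come from a standard sign analysis: since $1/\Gamma(-5/2)<0$ and the coefficients $n_{0,d}$ are all positive, Pringsheim's theorem pins down the sign of $B(r_0)$ and hence of $a_3^0$.

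The main tool for producing the singularity structure is a nonlinear third-order ODE for $h$. Using the basis $\gamma_0=1$, $\gamma_1=H$, $\gamma_2=H^2$ of $H^*(\CC P^2)$ and the grading constraint \eqref{eq:gw-3}, the genus-zero Gromov-Witten potential takes the compact form
$$F(t_0,t_1,t_2) \,=\, \tfrac12(t_0^2 t_2 + t_0 t_1^2) + e^{t_1/3}\,h\bigl(t_2\,e^{t_1/3}\bigr),$$
so the WDVV equation for $F$ collapses to a single nonlinear third-order ODE for $h(u)$, which is the analytic shadow of Kontsevich's recursion for $n_{0,d}$. Analyzing this ODE, I would establish in sequence that: (i) $h$ has a finite positive radius of convergence $r_0$; (ii) the only singularity of $h$ on $|u|=r_0$ is at $u=r_0$, by positivity plus Pringsheim; (iii) the singularity is a branch point of exponent exactly $5/2$, the exponent being forced by recasting the ODE as a first-order system in $(h,h',h'')$ and examining where the associated Jacobian degenerates; (iv) in the coordinate $s=(r_0-u)^{1/2}$ the rewritten ODE admits a unique analytic solution via Cauchy-Kowalevski, whose even part in $s$ yields the analytic function $A$ and whose $s^5$ part yields $B$, with complete Taylor expansions at $r_0$.

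The main obstacle will be steps (iii)-(iv): verifying that the singularity is of type exactly $(r_0-u)^{5/2}$ with fully analytic $A,B$ and no hidden logarithmic or higher-branch corrections. Leading-order asymptotics $n_{0,d}\sim C r_0^{-3d}d^{-7/2}$ of Di Francesco-Itzykson type are essentially classical, but upgrading them to a full expansion for arbitrary $N$ requires ruling out sub-dominant obstructions in the ODE and checking that every further Taylor coefficient of $A,B$ is genuinely produced, rather than being suppressed or replaced by logarithms. I expect this ODE analysis near $u=r_0$ to consume the bulk of the argument. The companion genus-one result alluded to in the abstract should then follow by an analogous analysis, using the Getzler/Eguchi-Hori-Xiong relation to express $n_{1,d}$ in terms of derivatives of $h$ near $u=r_0$, so that the genus-one singularity inherits its algebraic type from the genus-zero $h$.
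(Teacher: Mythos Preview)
Your overall strategy---derive a nonlinear third-order ODE from WDVV, locate the dominant singularity of the generating function, show it is an algebraic branch point of half-integer exponent, and then read off the full asymptotic expansion of the coefficients---is exactly the route the paper takes. The paper works with $F_0(z)=\sum_{d\ge1}n_{0,d}e^{dz}$ rather than your $h(u)=\sum n_{0,d}u^{3d-1}$, proves by hand a Puiseux expansion $F_0(x_0+z)=\sum_{d\ge0}a_d z^{d/2}$ with $a_1=a_3=0$, $a_5\neq0$ (your ``exponent $5/2$''), and then performs the contour integral explicitly; your invocation of Flajolet--Odlyzko transfer is simply the packaged version of that same contour computation. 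The paper's device for establishing the branch structure is a reparametrisation $dt/dz=(27+2F_0'-3F_0'')^{-1}$ which turns the ODE into a polynomial first-order system that is regular through the singular time; one then shows $\hat z'(t_1)=0$, $\hat z''(t_1)<0$, so $z-x_0\sim(t-t_1)^2$, which is functionally the same as your proposed Cauchy--Kowalevski argument in $s=(r_0-u)^{1/2}$ but more concrete.

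There is one genuine gap in your outline. Your step~(ii), ``the only singularity of $h$ on $|u|=r_0$ is at $u=r_0$, by positivity plus Pringsheim,'' is not correct as stated. Pringsheim only tells you $u=r_0$ \emph{is} a singularity; it says nothing about uniqueness. In fact, because your $h$ is supported on exponents $\equiv -1\pmod 3$, it satisfies $h(\omega u)=\omega^{-1}h(u)$ for $\omega=e^{2\pi i/3}$, so there are three equispaced dominant singularities on $|u|=r_0$, not one. More to the point, even after passing to the correct variable (the paper's $e^z$, or equivalently $v=u^3$), you still need to prove analytic continuation into a $\Delta$-domain beyond the circle, and this does \emph{not} follow from positivity alone. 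The paper supplies the missing step: for $0<t<2\pi$ one has the strict inequality $|(3F_0''-2F_0')(x_0+it)|<(3F_0''-2F_0')(x_0)=27$ by positivity of the coefficients, hence $27+2F_0'-3F_0''\neq0$ there, and then the ODE itself provides the analytic continuation across $\{\text{Re}\,z=x_0,\ 0<\text{Im}\,z<2\pi\}$. You should insert this ODE-based argument into step~(ii). You also do not say how you establish finiteness of $r_0$ in step~(i); the paper imports the two-sided bound $(1/27)^d d^{-7/2}\le n_{0,d}\le 3(4/15)^d d^{-7/2}$ (Lemma~\ref{lem:1}, following Zinger) to pin down $x_0\in[\ln(15/4),\ln 27]$, and separately argues that $27+2F_0'-3F_0''$ must vanish at $x_0$ (else the ODE would extend $F_0$ past $x_0$, contradicting that $x_0$ is the abscissa of convergence). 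Both of these ingredients are needed before your steps~(iii)--(iv) can begin.
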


\begin{cor}\label{cor:1}
Let $n_{0,d}$ be the enumerative invariants as above for $\CC P^2$, then
\begin{equation}\label{eq:gw-4-1}
\lim_{d\to \infty}\,\sqrt[d]{n_{0,d}}\,=\,e^{-x_0}.
\end{equation}
\end{cor}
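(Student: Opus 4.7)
The plan is to obtain the corollary as a direct asymptotic consequence of Theorem~\ref{th:main}, applied with a specific value of $N$. I would take $N=4$ in the theorem, which gives
\begin{equation*}
n_{0,d}\,=\,e^{-dx_0}\bigl(a_3^0\,d^{-7/2}\,+\,O(d^{-9/2})\bigr)\,=\,e^{-dx_0}\,d^{-7/2}\,\bigl(a_3^0\,+\,O(d^{-1})\bigr).
\end{equation*}
Since $a_3^0>0$ by hypothesis, for all sufficiently large $d$ the factor $a_3^0+O(d^{-1})$ lies in a fixed compact subinterval of $(0,\infty)$. In particular $n_{0,d}>0$ for $d$ large, so $\sqrt[d]{n_{0,d}}$ is a well-defined positive real number.

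Next I would take the $d$-th root of both sides:
\begin{equation*}
\sqrt[d]{n_{0,d}}\,=\,e^{-x_0}\,\cdot\,d^{-7/(2d)}\,\cdot\,\bigl(a_3^0\,+\,O(d^{-1})\bigr)^{1/d}.
\end{equation*}
The two correction factors can then be handled by taking logarithms: one has $d^{-7/(2d)}=\exp(-(7/2)\log d /d)\to 1$ and $(a_3^0+O(d^{-1}))^{1/d}=\exp((\log a_3^0+o(1))/d)\to 1$ as $d\to\infty$. Combining these limits yields \eqref{eq:gw-4-1}.

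There is essentially no obstacle here, as the substantive analytic content is entirely packaged in Theorem~\ref{th:main}; the only minor point to verify is the positivity of $n_{0,d}$ for large $d$, which is precisely why the hypothesis $a_3^0>0$ is needed in the theorem statement. One could, if desired, prove the corollary from the weaker assumption that only the exponential factor $e^{-dx_0}$ is known, provided one also knows $n_{0,d}$ grows at most polynomially times this exponential and is bounded below by a polynomially decaying factor times $e^{-dx_0}$; Theorem~\ref{th:main} supplies both bounds at once.
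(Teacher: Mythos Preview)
Your proposal is correct and matches the paper's approach: the paper gives no separate proof of Corollary~\ref{cor:1}, treating it as an immediate consequence of Theorem~\ref{th:main}, and your argument spells out exactly that deduction. The use of $a_3^0>0$ to guarantee eventual positivity (and hence well-definedness of the $d$-th root) is indeed the reason this hypothesis is included in the theorem, as the authors' editorial notes at the end of the source confirm.
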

In \cite{francesco}, \eqref{eq:gw-4-1} was claimed by P. Di Francesco and C. Itzykson without a valid proof. In \cite{zinger11}, A. Zinger verified some claims in \cite{francesco} and proposed a proof of \eqref{eq:gw-4-1} under a conjectured condition which is still open.

Our second result is
\begin{thm}
\label{th:main-2}
In the case of genus $g=1$, we have the following asymptotic expansion: There exists $ a_k^1\in\RR$ and $x_0$ same as in Theorem \ref{th:main} such that for any $N \ge 4$,
$$n_{1,d}\,=\,e^{-dx_0}\,\left(\frac{1}{48 d}\,+\,\sum_{ k=0}^{N-1}\,a_k^1\,d^{-k-3/2}\,+\,O(d^{-N-3/2})\right).$$
\end{thm}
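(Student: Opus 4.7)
The plan is to reduce Theorem \ref{th:main-2} to Theorem \ref{th:main} via a closed-form relation expressing $n_{1,d}$ in terms of the genus-zero invariants $\{n_{0,d'}\}_{d'\le d}$. For $\CC P^2$ such a relation is available from Getzler's genus-one identity (equivalently Eguchi-Hori-Xiong or Pandharipande-Zinger), and takes the schematic form
$$n_{1,d} \;=\; \alpha(d)\, n_{0,d} \;+\; \sum_{\substack{d_1+d_2=d\\ d_1,d_2\ge 1}} \gamma_{d_1,d_2}\, n_{0,d_1}\, n_{0,d_2},$$
where $\alpha$ is a polynomial in $d$ and $\gamma_{d_1,d_2}$ is polynomial in $(d_1,d_2)$ times a binomial coefficient $\binom{3d-2}{3d_1-1}$ (coming from distributing the $3d-1$ point constraints between two rational components joined at a node that smooths into an elliptic curve). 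Substituting the expansion of Theorem \ref{th:main} into the \emph{diagonal piece} $\alpha(d)\, n_{0,d}$ produces contributions only at orders $d^{-k-3/2} e^{-dx_0}$ with $k\ge 0$; this piece cannot generate the leading $\tfrac{1}{48d}\, e^{-dx_0}$ term, which must therefore originate from the \emph{convolution piece}.

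For the convolution I will use Laplace's method. The exponentials combine as $e^{-(d_1+d_2)x_0} = e^{-dx_0}$, so there is no exponential saddle; localization is provided entirely by the binomial coefficient, which by Stirling is unimodal around $d_1 = d/2$ with Gaussian width $O(\sqrt{d})$. I split the sum into a \emph{bulk} regime $d^{1/2}\le d_1 \le d-d^{1/2}$, where Theorem \ref{th:main} applies uniformly to both $n_{0,d_1}$ and $n_{0,d_2}$, and a \emph{tail} regime where the asymptotic is invoked only for the larger index while exact values are used for the smaller. Setting $d_1 = d/2 + s\sqrt{d}$, the bulk becomes a Gaussian integral in $s$; performing the Gaussian moments and carrying all sub-leading $d^{-1}$ corrections term-by-term yields an expansion of the bulk whose leading term evaluates to $\tfrac{1}{48d}\, e^{-dx_0}$, with further corrections contributing to $\sum a_k^1 d^{-k-3/2}$. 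The tails, together with the diagonal piece, contribute only at orders $d^{-k-3/2}e^{-dx_0}$ and thus feed into the coefficients $a_k^1$.

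The main obstacle is the exact determination of the universal leading constant $\tfrac{1}{48}$. It must be independent of the a priori unknown $a_3^0$ of Theorem \ref{th:main}, so one has to exhibit a cancellation between the quadratic factor $(a_3^0)^2$ produced by $n_{0,d_1}n_{0,d_2}$ and the normalizing constants of the Laplace expansion (Stirling, Gaussian moments, combinatorial coefficients in Getzler's recursion), mediated by the self-consistency relation that determines $x_0$ and $a_3^0$ in the proof of Theorem \ref{th:main}. Writing down this cancellation rigorously, while controlling errors uniformly across the bulk-tail decomposition to arbitrary order in $d^{-1}$, is the delicate technical heart of the argument; everything else in the proof is routine once this identification is in place.
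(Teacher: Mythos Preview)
Your approach diverges sharply from the paper's and, as written, has a concrete gap. The paper works entirely with generating functions: setting $F_1(z)=\sum_d n_{1,d}e^{dz}$, the Eguchi--Hori--Xiong identity yields the \emph{algebraic} relation
\[
F_1' \;=\; \frac{F_0'''-3F_0''+2F_0'}{8\,(27+2F_0'-3F_0'')}\,.
\]
Since Lemma~\ref{lem:2} supplies the Puiseux expansion $F_0(x_0+z)=\sum_k a_k z^{k/2}$ with $a_1=a_3=0$, the denominator has a simple Puiseux zero at $x_0$ (leading term $-\tfrac{45}{4}a_5\,z^{1/2}$) while the numerator has a simple Puiseux pole (leading term $\tfrac{15}{8}a_5\,z^{-1/2}$); dividing, the $a_5$'s cancel and one reads off $F_1'(x_0+z)=\sum_{k\ge -2} a'_k z^{k/2}$ with $a'_{-2}=-\tfrac{1}{48}$ in one line. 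The constant $\tfrac{1}{48d}$ then appears as the residue contribution of the simple pole of $F_1'$ when the contour argument of Theorem~\ref{th:main} is rerun with small semicircles around $x_0$ and $x_0+2\pi i$. There is no Laplace analysis, no bulk/tail split, and no delicate cancellation of $(a_3^0)^2$.

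Your coefficient-level scheme, by contrast, is not correct as stated. The schematic $n_{1,d}=\alpha(d)n_{0,d}+\sum \gamma_{d_1,d_2}n_{0,d_1}n_{0,d_2}$ does not hold: extracting the $e^{dz}$ coefficient from the identity above gives
\[
27\,d\,n_{1,d}\;=\;\tfrac{1}{8}\,d(d-1)(d-2)\,n_{0,d}\;-\;\sum_{d_1+d_2=d}d_1(2-3d_1)\,d_2\,n_{0,d_1}\,n_{1,d_2},
\]
which is self-referential in the genus-one numbers, so one cannot simply substitute Theorem~\ref{th:main}. (The binomial $\binom{3d-2}{3d_1-1}$ you invoke does not appear in the paper's normalization $n_{0,d}=N_d/(3d-1)!$; the factorials are already absorbed, so there is no Gaussian peak from a binomial to localize on.) Iterating the recursion would give sums over all compositions of $d$, not pairs, and the Laplace picture degenerates. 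More fundamentally, you concede that extracting the universal $\tfrac{1}{48}$ independently of $a_3^0$ is ``the delicate technical heart'' and leave it undone; in the paper's route this constant is an immediate algebraic consequence of the ODE and requires no such miracle.
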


\begin{cor}\label{cor:2}
In case of $\CC P^2$, we have
\begin{equation}\label{eq:gw-4-2}
\lim_{d\to \infty}\,\sqrt[d]{n_{1,d}}\,=\,
\lim_{d\to \infty}\,\sqrt[d]{n_{0,d}}\,=\,e^{-x_0}.
\end{equation}
\end{cor}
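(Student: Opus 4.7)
The plan is to deduce Corollary \ref{cor:2} immediately from Theorem \ref{th:main-2} together with Corollary \ref{cor:1}. The second equality $\lim_{d\to\infty}\sqrt[d]{n_{0,d}}=e^{-x_0}$ is Corollary \ref{cor:1}, so only the first equality $\lim_{d\to\infty}\sqrt[d]{n_{1,d}}=e^{-x_0}$ needs to be checked. Note that, by the wording of Theorem \ref{th:main-2}, the constant $x_0$ appearing there is literally the same as the one in Theorem \ref{th:main}, so no separate matching of the exponential rates is required.

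The first step is to factor the expansion of Theorem \ref{th:main-2} (applied with any fixed $N\ge 4$) in the form
$$n_{1,d}\,=\,\frac{e^{-dx_0}}{48d}\,\bigl(1+\rho_d\bigr),\qquad \rho_d\,=\,48\sum_{k=0}^{N-1} a_k^1\,d^{-k-1/2}\,+\,O(d^{-N-1/2}),$$
so that $\rho_d=O(d^{-1/2})\to 0$ as $d\to\infty$. In particular, $n_{1,d}>0$ for all sufficiently large $d$, and $\sqrt[d]{n_{1,d}}$ is well-defined and positive. The second step is to take logarithms and divide by $d$:
$$\frac{1}{d}\log n_{1,d}\,=\,-\,x_0\,-\,\frac{\log(48d)}{d}\,+\,\frac{\log(1+\rho_d)}{d}.$$
Both of the last two terms tend to $0$ as $d\to\infty$ — the first because $(\log d)/d\to 0$, the second because $\rho_d\to 0$. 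Exponentiating gives $\sqrt[d]{n_{1,d}}\to e^{-x_0}$, as required.

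I do not anticipate any real obstacle: Theorem \ref{th:main-2} has already done the hard work of pinning down the exponential rate $e^{-dx_0}$ and showing that the remaining factor is of polynomial order in $d$. The content of Corollary \ref{cor:2} is simply the elementary observation that subexponential prefactors — here the $1/(48d)$ leading term together with the $d^{-k-1/2}$ tails — disappear upon taking $d$-th roots, because $d^{-c/d}\to 1$ for any constant $c$. Thus the corollary is a direct, essentially cosmetic, consequence of the two main theorems.
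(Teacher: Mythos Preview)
Your proposal is correct and matches the paper's treatment: the paper states Corollary~\ref{cor:2} immediately after Theorem~\ref{th:main-2} with no separate proof, treating it as an immediate consequence of the asymptotic expansion (together with Corollary~\ref{cor:1} for the $n_{0,d}$ part). Your two-step argument---factoring out the leading $e^{-dx_0}/(48d)$ term and observing that the polynomial-order prefactor disappears under $d$-th roots---is exactly the routine computation the paper leaves implicit.
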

This affirms a conjecture in \cite{zinger11}. It is plausible that $\sqrt[d]{n_{g,d}}$ converge to a fixed number independent of $g$. In general, we expect the same, i.e., the $\ell$-th root of $\psi_{g,\ell a}(0,0,\cdots,0,k_m(\ell),\cdots,k_L(\ell))$ converges to a fixed number which is related to the convergent radius of the generating functions.

It remains to understand geometric information encoded in the coefficients $a_k^{0}$ and $a_k^1$ in the above expansions.

\section{Consequences of the WDVV equation}

In this section, we show some consequences of the WDVV equation.
First we recall for any symplectic manifold $M$, $\bF_0^M$ satisfies the WDVV equation:
\begin{equation}\label{eq:gw-6}
\sum_{b,c}\,\frac{\partial ^3 \bF_0^M}{\partial t_i\partial j \partial b}\,\eta^{bc}\,\frac{\partial ^3 \bF_0^M}{\partial t_k\partial l \partial c}
\,=\,\sum_{b,c}\,\frac{\partial ^3 \bF_0^M}{\partial t_i\partial l \partial b}\,\eta^{bc}\,\frac{\partial ^3 \bF_0^M}{\partial t_k\partial j \partial c},
\end{equation}
where $i,j,k,l$ run over $0,\cdots, L$.

If we let $m\in [2, L]$ such that $d_1=1$ for $i < m$ and $d_i > 1$ for $i \ge m$, then
\begin{equation}\label{eq:gw-5}
\bF_0^M(t_0,\cdots,t_L)\,=\,p\,+\,\sum_{a\not= 0}\,\psi_{0,a}(0,\cdots,0,k_{m},\cdots,k_L) \,e^{\sum_{i=1}^{m-1}\,t_i\,\gamma_i(a)},
\end{equation}
where $p$ is the homogeneous cubic polynomial in $t_0,\cdots,t_L$ which gives rise to the cup product on $H^*_{ev}(M)$. Also $F_0$ satisfies the WDVV equation:

Now we assume $M=\CC P^2$, then $H_2(M,\ZZ)=\ZZ $ and \eqref{eq:gw-5} becomes
\begin{equation}\label{eq:gw-7}
\bF^{\CC P^2}_0(t)\,=\,\frac{1}{2}(t_2t_0^2+t_0t_1^2)+\sum_{k=1}^{\infty}\,n_{0,k}\,t_2^{3k-1}\,e^{kt_1},
\end{equation}
where $t=(t_0,t_1,t_2)$. We can further write
\begin{equation}\label{eq:gw-8}
\bF^{\CC P^2}_0\,=\,\frac{1}{2}(t_2t_0^2+t_0t_1^2)\,+\,t_2^{-1}F_0(t_1+3\ln t_2),
\end{equation}
where
$$F_0(z)\,=\,\sum_{d=1}^\infty\, n_{0,d} \,e^{d z}.$$

Using the WDVV equation which $\bF=\bF_0^{\CC P^2}$, we have
$$\bF_{112}^2\,=\,\bF_{111}\bF_{122}+\bF_{222},\ \ \bF_{ijk}\,=\,\frac{\partial^3\bF}{\partial t_i\partial t_j\partial t_k}.$$
Direct computations show:
$$\bF_{111}=t_2^{-1}F_0'''(t_1+3\ln t_2),\ \ \ \bF_{112}=t_2^{-1}(3F_0'''-F_0'')(t_1+3\ln t_2),$$
$$\bF_{122}=t_2^{-1}(9F_0'''-9F_0''+2F_0')(t_1+3\ln t_2)$$
and
$$ \bF_{222}=t_2^{-1}(27F_0'''-54F_0''+33F_0'-6F_0)(t_1+3\ln t_2).$$
It follows
$$(3F_0'''-F_0'')^2=F_0'''(9F_0'''-9F_0''+2F_0')+27F_0'''-54F_0''+33F_0'-6F_0.$$
This can be simplified as
\begin{equation}\label{1}(27+2F_0'-3F_0'')F_0'''=6F_0-33F_0'+54F_0''+(F_0'')^2,\end{equation}

The following lemma is taken from \cite{zinger11}.
\begin{lem}\label{lem:1}
We have the estimates for $n_{0,d}$:
\begin{equation}\label{eq:gw-5-1}
\left(\frac{1}{27}\right)^d\,d^{-\frac{7}{2}} \,\le\,n_{0,d}\,\le\,3\,\left(\frac{4}{15}\right)^d\,d^{-\frac{7}{2}}.
\end{equation}
\end{lem}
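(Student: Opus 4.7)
The plan is to extract an explicit convolution recursion for $n_{0,d}$ from equation \eqref{1} and then prove both inequalities by strong induction on $d$, with finitely many base cases verified directly from the classical values of $n_{0,d}$. Substituting $F_0(z)=\sum_{d\ge1}n_{0,d}e^{dz}$ into \eqref{1}, matching coefficients of $e^{dz}$, and symmetrizing the convolution in $(d_1,d_2)$ yields
\[
3(d-1)(3d-1)(3d-2)\,n_{0,d}\;=\;\sum_{\substack{d_1+d_2=d\\d_1,d_2\ge1}} d_1d_2\!\left[\tfrac{3(d+2)}{2}d_1d_2 - d^2\right] n_{0,d_1}n_{0,d_2}.
\]
The bracketed factor attains its minimum at $d_1=1$, where it equals $(d^2+3d-6)/2>0$ for $d\ge 2$, so every summand on the right is nonnegative. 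This positivity is the decisive structural feature: for the lower bound one may retain any convenient subset of summands, and for the upper bound one cleanly estimates the full sum.

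For the upper bound $n_{0,d}\le 3(4/15)^d d^{-7/2}$, verify the inequality directly for $d\le d_0$ (the constant $4/15$ is essentially dictated by these small cases). For $d>d_0$, the inductive hypothesis bounds the right-hand side by $9(4/15)^d\bigl[\tfrac{3(d+2)}{2}\sum_{d_1+d_2=d}(d_1d_2)^{-3/2}-d^2\sum_{d_1+d_2=d}(d_1d_2)^{-5/2}\bigr]$. Each convolution is dominated by its endpoints $d_1=1$ and $d_1=d-1$, with asymptotic $2\zeta(s)d^{-s}(1+O(1/d))$ for $s=3/2$ and $5/2$, so the bracket is $O(d^{-1/2})$. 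Dividing by $3(d-1)(3d-1)(3d-2)\sim 27 d^3$ produces the effective constant $9(3\zeta(3/2)-2\zeta(5/2))/27\approx 1.72$, comfortably below the target $3$, and the induction closes.

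For the lower bound $n_{0,d}\ge(1/27)^d d^{-7/2}$ the displayed bound is too weak to serve as inductive hypothesis: even retaining the full convolution sum, the inductive step only produces a multiplier $(3\zeta(3/2)-2\zeta(5/2))/27\approx 0.19$. I would therefore strengthen the hypothesis to $n_{0,d'}\ge B\,(1/27)^{d'}(d')^{-7/2}$ with $B\ge 27/(3\zeta(3/2)-2\zeta(5/2))\approx 5.24$, so that the inductive multiplier $0.19\,B^2\ge B$. The strengthened hypothesis is amply satisfied at the base, since $n_{0,d}\cdot 27^d\cdot d^{7/2}$ takes the values $13.5,\,68.7,\,274,\cdots$ for $d=1,2,3,\cdots$, growing rapidly; the stated lemma follows by discarding the factor $B\ge 1$.

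The principal obstacle is obtaining constants sharp enough that the induction actually closes starting from explicit base cases. The asymptotic multiplier $\approx 0.19$ is the bottleneck on the lower-bound side and forces the strengthened inductive hypothesis; on the upper-bound side the multiplier $\approx 1.72$ sits comfortably below $3$. The remaining technical content is making the $O(1/d)$ corrections to the tail estimates $\sum_{d_1+d_2=d}(d_1d_2)^{-s}\sim 2\zeta(s)d^{-s}$ quantitative enough so that the induction closes from a concrete finite $d_0$ compatible with the direct small-$d$ verification.
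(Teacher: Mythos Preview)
Your strategy is workable in principle, but it is not the route the paper takes, and it is considerably harder to close than the paper's argument. Both you and the paper start from the same convolution recursion
\[
n_{0,d}\;=\;\sum_{d_1+d_2=d} T(d_1,d_2)\,n_{0,d_1}n_{0,d_2},\qquad
T(d_1,d_2)=\frac{d_1d_2\bigl(3d_1d_2(d+2)-2d^2\bigr)}{2(3d-3)(3d-2)(3d-1)},
\]
but the paper then \emph{sandwiches the kernel}, not the solution: it finds explicit functions $f_1(d)=d(3d-2)/54$ and $f_2(d)=2d^2/15$ with
\[
\frac{f_1(d_1)f_1(d_2)}{f_1(d)}\;\le\;T(d_1,d_2)\;\le\;\frac{f_2(d_1)f_2(d_2)}{f_2(d)}.
\]
For any multiplicative kernel $f(d_1)f(d_2)/f(d)$ the recursion is exactly solvable: setting $m_d=f(d)n_d$ collapses it to $m_d=\sum_{d_1+d_2=d}m_{d_1}m_{d_2}$, whose solution is the Catalan sequence $m_d=\frac{(2d-2)!}{d!(d-1)!}\,m_1^d$. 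One induction then gives $n_d^1\le n_{0,d}\le n_d^2$ for the two comparison sequences, and a single application of Stirling to $\frac{(2d-2)!}{d!(d-1)!}\sim 4^d/(4\sqrt{\pi}\,d^{3/2})$ produces both bounds at once, with no tail estimates or strengthened hypotheses needed.

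By contrast, your direct induction forces you to control $\sum_{d_1+d_2=d}(d_1d_2)^{-s}$ to within a constant factor close to $2\zeta(s)$ on both sides: a crude bound like $(d-d_1)^{-s}\le (d/2)^{-s}$ inflates the constant by $2^s$, which already pushes the upper-bound multiplier past $3$ and makes the lower-bound bracket negative. So the ``remaining technical content'' you flag is real work, and the need to boost the lower-bound hypothesis by a factor $B\approx 5.24$ is a genuine complication. The paper's kernel-sandwich trick sidesteps all of this by moving the asymptotics into one clean Stirling step.
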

\begin{proof}
We outline its proof by following \cite{zinger11}. First we observe: If $n_d$ is a sequence of numbers satisfying:
$$n_d\,=\,\sum_{d_1,d_2\ge 0, d_1+d_2=d}\,\frac{f(d_1) f(d_2)}{f(d)}\,n_{d_1} n_{d_2}$$
for some $f:\ZZ\mapsto \RR^+$, then
$$n_d\,=\,\frac{(2d-2)!}{d!(d-1)!}\,(f(1)\,n_1)^d,~~~d\ge 1.$$
It is proved in \cite{ruantian1} by using the WDVV equation
$$n_{0,d}\,= \,\sum_{d_1,d_2\ge 0, d_1+d_2=d} \,T(d_1,d_2)\,n_{0,d_1} n_{0,d_2},$$
where
$$T(d_1,d_2)\,=\,\frac{d_1d_2(3d_1d_2(d+2) - 2d^2)}{2(3d-3)(3d-2)(3d-1)}.$$
It is easy to see
$$\frac{f_1(d_1)f_1(d_2)}{f_1(d)}\,\le T(d_1,d_2)\,\frac{f_2(d_1)f_2(d_2)}{f_2(d)},$$
where $f_1(d) = d(3d-2)/54$ and $f_2(d)= 2 d^2/15$.
If we denote by $n_d^j$ the sequence of numbers determined by the recursive formula for above $n_d$'s together with $f$ replaced $f_j$ and $n_1^j=1$, then
by induction, we can show $n_d^1\le n_{0,d}\le n_d^2$. Therefore, we have
$$\frac{(2d-2)!}{d!(d-1)!}\,\left(\frac{1}{54}\right)^d\,\le n_{0,d}\,\le\, \frac{(2d-2)!}{d!(d-1)!}\,\left(\frac{2}{15}\right)^d.$$
The lemma follows from this and Stirling formula for $d!$.
\end{proof}

\section{Proof of Theorem \ref{th:main}}

First we observe that as a consequence of Lemma \ref{lem:1}, we get a convergent radius $x_0$ such that the power series
$\sum_{d=1}^\infty \,n_{0,d}\,e^{d z}$ converges for any $z\in \CC$ with $Re(z) < x_0$, so $F_0(z)$ is well-defined in the region $\{z\,|\,Re(z) < x_0\}$.

Since $n_{0,d} \ge 1$, as observed in \cite{zinger11}, we can deduce
\begin{equation}\label{eq:gw-5-0}
0\,<\,F_0(z)\,<\,F_0'(z)\,<\,F_0''(z)\,<\,F_0'''(z),~~~~~~\forall\ z\in (-\infty,x_0)
\end{equation}
and
\begin{equation}\label{eq:gw-5-2}27+2F_0'-3F_0''\,>\,0,~~~~~~ \forall\ z\in (-\infty,x_0).
\end{equation}
By \eqref{eq:gw-5-0}, the series for $F_0$, $F_0'$ and $F_0''$ increase along $(-\infty,x_0)$. By \eqref{eq:gw-5-2}, $F_0< F_0' < F_0'' < 27$ on $(-\infty,x_0)$.
So $F_0$, $F_0'$ and $F_0''$ converge at $z=x_0$. Moreover, $27+2F_0'-3F_0''=0 $ at $z=x_0$, otherwise, \eqref{1} could be used to compute all derivatives of $F_0$ at $z=x_0$ and get a contradiction to the fact that $x_0$ is the convergent radius.

Clearly, $F_0$ is analytic in $\{\text{Re}\ z<x_0\}$.
\begin{lem}
\label{lem:2}
$F_0$ can be analytically continued to $\{\text{Re}\ z<x_0+\delta_0,\ 0\leq\text{Im}\ z\leq 2\pi\}, $ for some $0<\delta_0<1,\ i.e., $ $F_0$ is analytic in $\{\text{Re}\ z<x_0+\delta_0,\ 0<\text{Im}\ z< 2\pi\}, $ and continuous in $\{\text{Re}\ z<x_0+\delta_0,\ 0\leq\text{Im}\ z\leq 2\pi\}$, and $F_0$ has the expansion at $x_0$,
\begin{equation}\label{2}F_0(x_0+re^{i\theta})\,=\,\sum_{d=0}^{\infty}\,a_{d}\,r^{d/2}e^{i\theta d/2},\ \ \forall\ r<\delta_0,\ 0\leq\theta\leq\pi,\end{equation}
\begin{equation}\label{3}F_0(x_0+2\pi i +re^{i\theta})\,=\,\sum_{d=0}^{\infty}\,a_{d}\,r^{d/2}e^{i\theta d/2},\ \ \forall\ r<\delta_0,\ \pi\leq\theta\leq{2}\pi,
\end{equation}
where
$a_1\,=\,a_3=0$. Also we have $ |a_d|\,\delta_0^{d/2}\leq C$.
\end{lem}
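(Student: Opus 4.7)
The plan is to desingularize the ODE \eqref{1} at $z=x_0$ by extracting the square-root branch explicitly. Denote $p:=27+2F_0'-3F_0''$, which vanishes at $x_0$ by the argument recalled in the excerpt. Using $p'=2F_0''-3F_0'''$ together with \eqref{1} to eliminate $F_0'''$, one checks
\[
pp'=\phi,\qquad \phi(F_0,F_0',F_0''):=-18F_0+99F_0'-108F_0''+4F_0'F_0''-9(F_0'')^2,
\]
so that $h:=p^2$ satisfies the polynomial ODE $h'=2\phi$. This rewrites \eqref{1} as a first-order system for $(F_0,F_0',h)$ with entire right-hand side, coupled to the algebraic relation $F_0''=(27+2F_0'-\sqrt h)/3$.

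Next, I would change coordinates to $s:=(x_0-z)^{1/2}$ (positive branch on $(-\infty,x_0)$) and set $\hat F(s)=F_0(x_0-s^2)$, $\hat v(s)=F_0'(x_0-s^2)$, $\hat u(s)=F_0''(x_0-s^2)$, $\hat h(s)=h(x_0-s^2)$. The chain rule produces the regularized system
\[
\hat F'=-2s\hat v,\qquad \hat v'=-2s\hat u,\qquad \hat h'=-4s\,\hat\phi .
\]
Since $\hat h(0)=0$, the ansatz $\hat h=s^2 Q(s)$ turns the $\hat h$-equation into the linear ODE $sQ'+2Q=-4\hat\phi$, equivalently $(s^2Q)'=-4s\hat\phi$, which forces $Q(0)=-2\hat\phi(0)$. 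A direct substitution using the relation $F_0''(x_0)=9+\tfrac23 F_0'(x_0)$ (from $p(x_0)=0$) gives $\hat\phi(0)=-3R$ with $R:=6F_0(x_0)+15F_0'(x_0)+567+\tfrac{4}{9}F_0'(x_0)^2>0$, so $Q(0)=6R>0$. Consequently $p(x_0-s^2)=s\sqrt{Q(s)}$ is analytic near $s=0$ with the principal square root, and $\hat u=(27+2\hat v-s\sqrt Q)/3$ is analytic in the triple $(\hat F,\hat v,Q)$.

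The third step is to prove, via Picard iteration on the integral form
\[
\hat F=F_0(x_0)-2\!\int_0^s\!\sigma\hat v\,d\sigma,\quad \hat v=F_0'(x_0)-2\!\int_0^s\!\sigma\hat u\,d\sigma,\quad Q=-\frac{4}{s^2}\!\int_0^s\!\sigma\hat\phi\,d\sigma,
\]
existence of a unique analytic solution on a disk $|s|<\delta_0^{1/2}$ for some $0<\delta_0<1$. Each integral gains an extra power of $s$, so the operator is a contraction in the sup norm on a small enough closed disk. Expanding $\hat F(s)=\sum_{k\ge 0}A_ks^k$ and converting to $\tau:=(z-x_0)^{1/2}=\pm is$ then yields the Puiseux form \eqref{2} with $a_d$ proportional to $A_d$, together with the Cauchy estimate $|a_d|\,\delta_0^{d/2}\le\|\hat F\|_\infty$. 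Reading off $\hat F'(s)=-2s\hat v(s)$ and $\hat v'(s)=-2s\hat u(s)$ at $s=0$ gives $A_1=0$ and $A_3=-\tfrac23\hat v'(0)=0$, so $a_1=a_3=0$.

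Finally, the $2\pi i$-periodicity of $F_0$ on $\{\text{Re}\ z<x_0\}$ transports the entire construction to the singular point $x_0+2\pi i$, and the resulting Puiseux expansion has the identical coefficients $a_d$; the appropriate local variable there is $s'=(x_0+2\pi i-z)^{1/2}$, corresponding to $\theta\in[\pi,2\pi]$ and producing \eqref{3}. The two half-disks, glued to the original domain $\{\text{Re}\ z<x_0\}$ and to the analytic continuation through the interior of the strip (where $p\ne 0$ for $\delta_0$ small enough), yield analyticity on $\{\text{Re}\ z<x_0+\delta_0,\ 0<\text{Im}\ z<2\pi\}$ with continuity up to the boundary. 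The principal obstacle is the convergence claim for the $s$-expansion: one must verify that the $0/0$ indeterminacy in the $Q$-equation at $s=0$ is resolved by the single value $Q(0)=6R>0$, and that the desingularized integral system is genuinely a contraction on a small closed disk. A secondary technical point is ruling out new zeros of $p$ inside the narrow strip, which is what ultimately constrains the size of $\delta_0$.
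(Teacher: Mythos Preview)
Your route is sound and arrives at the same conclusion, but it is genuinely different from the paper's. The paper does not introduce the Puiseux variable $s=(x_0-z)^{1/2}$ at all. Instead it reparametrizes by a new time $t$ with $dt/dz=1/p$, $p=27+2F_0'-3F_0''$, and rewrites \eqref{1} as a \emph{polynomial} first-order system for $(x,y,w,z)$ in $t$, where $x=9F_0''-9F_0'+2F_0$, $y=3F_0''-F_0'$, $w=F_0''$. Since the right-hand side is entire, standard existence theory yields a real-analytic solution on a maximal interval with no singular-point analysis whatsoever; the square root then emerges from the single observation that $\widehat z'(t_1)=0$ and $\widehat z''(t_1)<0$ at $t_1=t(x_0)$, so that $t\mapsto z$ has a nondegenerate critical point and its local inverse is analytic in $(x_0-z)^{1/2}$. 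The coefficients $a_1=a_3=0$ fall out from integrating $w=F_0''$ twice. Your method trades this reparametrization for a direct desingularization $\hat h=s^2Q$, which is conceptually more transparent but forces you to confront a Briot--Bouquet type singularity in the $Q$-equation; note that your heuristic ``each integral gains an extra power of $s$'' is not literally true for the $Q$-integral (which has Lipschitz constant $2$, not $o(1)$), and the contraction actually relies on the factor $s$ in $\hat u=(27+2\hat v-s\sqrt Q)/3$. One item you leave implicit that the paper makes explicit: the non-vanishing of $p$ on the open segment $\{x_0+it:0<t<2\pi\}$ is not automatic from smallness of $\delta_0$ but follows from $3F_0''-2F_0'=\sum_{d\ge1}d(3d-2)n_{0,d}e^{dz}$ having strictly smaller modulus at $x_0+it$ than at $x_0$, since every $n_{0,d}>0$.
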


\begin{proof}
First we observe
$$3F_0''-2F_0'\,=\,\sum\limits_{d=1}^{\infty}\,d (3d-2)\,n_{0,d}\,e^{dz}.$$
Since $n_{0,d}>0$, we have for any $t\in (0, 2\pi)$
$$|(3F_0''-2F_0')(x_0+it)|\,<\,(3F_0''-2F_0')(x_0)\,=\,27.$$
So $(27-3F_0''+2F_0')(x_0+it)\,\neq\,0$. Then, using \eqref{1}, one can easily show that
$F_0$ can be analytically continued over a neighborhood of each point $x_0+it$ with $t\in (0,2\pi)$. Hence, it suffices to prove the analytic continuation
of $F_0$ around $x_0$ and $x_0+ 2\pi i$.

Introduce new variables $t,x,y,w$ such that
$$\frac{dt}{dz}=\frac{1}{27+2F_0'-3F_0''},\ \ x\,=\,9F_0''-9F_0'+2F_0,\ \ y\,=\,3F_0''-F_0',\ \ w\,=\,F_0'',$$
for example, we can take
\begin{eqnarray}
t(z)&=&\frac{z}{27}\,-\,\int_0^{+\infty}\left(\frac{1}{27+2F_0'(z-s)-3F_0''(z-s)}-\frac{1}{27}\right)ds\nonumber\\
x(z)&=&\sum_{d=1}^{\infty}\,(3d-1)(3d-2)\,n_{0,d}\,e^{dz}\nonumber\\
y(z)&=&\sum_{d=1}^{\infty}\,d (3d-1)\,n_{0,d}\,e^{dz}\nonumber\\
w(z)&=&\sum_{d=1}^{\infty}\,d^2\,n_{0,d}\,e^{dz}.\nonumber
\end{eqnarray}
In fact,
$$(\bF_{22},\bF_{12},\bF_{11})\,=(0,0,t_0)+\,t_2^{-1}\,(x,y,w)(t_1+3\ln t_2).$$
Then $t\in \mathbb{R},\ x,y,w>0,$ and $x,y,w,t$ are strictly increasing on $(-\infty,x_0)$. Clearly, $x,y,w\to 0,\ t\to-\infty$ for $ z\to-\infty$, in particular,
$t(z)$ has an inverse $z(t)$ which maps $(-\infty, t_0')$ onto $(-\infty,x_0)$, where $t_0'=\lim_{z\to x_0} t(z)$. Note that $t_0' < \infty$
since $w$ blows up at finite time by the last equation in \eqref{4-1}.
Writing (\ref{1}) in terms of the variable $t$, we get
\begin{equation}\label{4-1}\frac{d}{dt}\left(\ba{l}x\\y\\w\\z\ea\right)
=\left(\ba{l}27x+4y^2\\9x+18y+2yw\\3x+6y+9w+w^2\\27-2y+3w\ea\right).
\end{equation}
Then \eqref{4-1} has a real analytic solution $ (\widehat{x},\widehat{y},\widehat{w},\widehat{z})(t)$ in the maximal interval $(-\infty,t_0)$ for some $t_0\in \mathbb{R}$ which extends the solution $(x,y,w,z)$ on $(-\infty,t_0')$, that is,
$$(x(z),y(z),w(z),z)=(\widehat{x},\widehat{y},\widehat{w},\widehat{z})(t(z)),~~~\forall z\in (-\infty,x_0).$$
Using \eqref{4-1}, we deduce that $\widehat{z}\in \mathbb{R},\ \widehat{x},\widehat{y},\widehat{w}$ are
strictly increasing positive functions on $(-\infty,t_0)$, and
$$\lim_{t\to t_0-}\widehat{x}(t)+\widehat{y}(t)+\widehat{w}(t)=+\infty.$$
Moreover, it follows from the equations on $\widehat{x},\widehat{y},\widehat{w}$ in \eqref{4-1}
$$\frac{d (\widehat{x}+\widehat{y}+\widehat{w})}{dt} \,\le\, (39+4\widehat{y}+\widehat{w})(\widehat{x}+\widehat{y}+\widehat{w}).$$
It implies
$$\frac{d \log (\widehat{x}+\widehat{y}+\widehat{w})}{dt} \,\le\, 4 (10 + \widehat{y}+\widehat{w}).$$
Hence, we have
\begin{equation}\label{4-2}
\int_{t_0-1}^{t_0}(\widehat{y}(t)+\widehat{w}(t))dt\,=\,+ \infty.
\end{equation}

Observe that
$$2y-3w\,=\,3F_0''-2F_0'\,=\,\sum_{d=1}^{\infty}\,d (3d-2)\,n_{0,d}\,e^{dz}\,>\,0$$
and $2y-3w$ is strictly increasing on $(-\infty,x_0)$. It follows that $(2\widehat{y}-3\widehat{w})(t)>0$ for $t=t(z),\ z\in(-\infty,x_0)$ and $(2\widehat{y}-3\widehat{w})(t)\to 0$ for $t\to-\infty$.
Using \eqref{4-1}, we have
\begin{equation}
\label{4-3}
(2\widehat{y}-3\widehat{w})'\,=\,9\widehat{x}\,+\,(9+\widehat{w})(2\widehat{y}-3\widehat{w})\,+\,2 \widehat{w}\widehat{y}.
\end{equation}
Therefore, $(2\widehat{y}-3\widehat{w})(t)>0$ and $(2\widehat{y}-3\widehat{w})'(t)>0$ for $t\in(-\infty,t_0).$
Further, we derive from \eqref{4-3} the following differential inequality on $(t_0-1, t_0)$,
$$(2\widehat{y}-3\widehat{w})'(t)\,\ge\,(2\widehat{y}-3\widehat{w})(t_0-1)\widehat{w}(t)\,+\,2\widehat{w}(t_0-1)\widehat{y}(t).$$
Combining this with \eqref{4-2}, we get
$$\lim_{t\to t_0-}(2\widehat{y}-3\widehat{w})(t)\,=\,+\infty,$$
so there exists a unique $t_1\in(-\infty,t_0) $ such that $ (2\widehat{y}-3\widehat{c})(t_1)=27$.
By \eqref{4-1}, this implies that $\widehat{z}'(t_1)=0$ and $\widehat{z}''(t_1)<0$.
Since $27+2F_0'-3F_0''=0 $ at $z=x_0$, we have $\widehat{z}'(t(x_0))=(27+2F_0'-3F_0'')(x_0)=0$, so $t_1=t(x_0)=t_0',\ x_0=\widehat{z}(t_1)$ and
there is a $\delta_1 >0$ such that for $|t|<\delta_1$,
$$\widehat{z}(t_1+t)\,=\,x_0\,+\,\sum_{k=2}^{\infty}\,b_k\,t^k,\ \ \widehat{w}(t_1+t)\,=\,\sum_{k=0}^{\infty}\,c_k\,t^k,$$
where $b_2<0,\ b_k,c_k\in \mathbb{R},\ c_1>0.$ As $ \widehat{w}(t)=w(\widehat{z}(t))$ for $t<t_1$, there is a $\delta_2 > 0$ such that for $0<z<\delta_2$,
$$w(x_0-z)\,=\,\sum_{k=0}^{\infty}\,c_k'\,z^{\frac{k}{2}}, $$
where $ c_k'\in \mathbb{R}, c_1'<0$. As $w(z)=F_0''(z)$, we have $$F_0(x_0-z)\,=\,F_0(x_0)\,-\,F_0'(x_0)\,z\,+\,\sum_{k=0}^{\infty}\,\frac{4\,c_k'}{(k+2)(k+4)}\,z^{\frac{k+4}{2}}.$$
Therefore (\ref{2}) is true for $\theta=\pi$ with
$$a_0\,=\,F_0(x_0),\,\, a_2\,=\,F_0'(x_0),\,\, a_1\,=\,a_3\,=\,0,\, \,a_k\,=\,\frac{4\,i^{-{k}}\,c_{k-4}'}{k(k-2)},~~\forall k\geq 4.$$
Since $F_0$ is $2\pi i$ periodic in $\{\text{Re}\ z<x_0\}$, \eqref{3} is also true for $\theta=\pi$. Thus we have the real analytic expressions
in \eqref{2} and \eqref{3} for $\theta=\pi$. It implies the analytic continuation
of $F_0$ around $x_0$ and $x_0+ 2\pi i$, and consequently, (\ref{2}) and
(\ref{3}).
\end{proof}

\begin{rem}
The analytic continuation of $F_0$ in \eqref{2} and \eqref{3} is a more precise form of the following expansion:
$$F_0(x_0+z)\,=\,\sum_{d=0}^{\infty}\,a_{d}\,z^{d/2},~~\forall\, |z|<\delta_0,~~a_1\,=\,a_3\,=\,0.$$
This expansion around $x_0$ was claimed in \cite{francesco} without a proof.
A justification was provided in \cite{zinger11}. Our proof above is different and clearer. Also, Lemma \ref{lem:2} is stronger than the expansion claimed in \cite{francesco}. It states that $F_0$ can be analytically continued in a neighborhood of $\{x_0+ it\,|\,t\in [0,2\pi]\}$. We need this stronger version in the subsequent arguments.
\end{rem}

Now we complete the proof of Theorem \ref{th:main}. Fix any $\delta\in (0,\delta_0)$, using contour integration,
we have
\begin{align}\label{4} n_{0,d}\,=&\,\frac{1}{2\pi i}\,\int_{x_0}^{x_0+2\pi i}\,F_0(z)\,e^{-dz}\,dz\nonumber\\
=&\,\frac{1}{2\pi i}\,\int_{0}^{\delta}\,F_0(x_0+t)\,e^{-d(x_0+t)}\,dt\nonumber \\
& +\,\frac{1}{2\pi }\,\int_{0}^{2\pi}\,F_0(x_0+\delta+it)\,e^{-d(x_0+\delta+it)}\,dt
\nonumber\\
&-\,\frac{1}{2\pi i}\,\int_{0}^{\delta}\,F_0(x_0+2\pi i+t)\,e^{-d(x_0+2\pi i+t)}\,dt .
\end{align}
One can easily have the following estimate:
\begin{equation}\label{5}\left|\frac{1}{2\pi }\,\int_{0}^{2\pi}\,F_0(x_0+\delta+it)\,e^{-d(x_0+\delta+it)}\,dt\right|\,\leq\, C_1\,e^{-d(x_0+\delta)},\end{equation}
where $C_1=\max\limits_{0\leq t\leq 2\pi}|F_0(x_0+\delta+it)|< +\infty$.
It follows from \eqref{2}, \eqref{3} and the fact that $a_1\,=\,a_3=0$
\begin{align}\label{6}&\frac{1}{2\pi i}\,\int_{0}^{\delta}\,F_0(x_0+t)\,e^{-d(x_0+t)}\,dt\,-\,\frac{1}{2\pi i}\,\int_{0}^{\delta}\,F_0(x_0+2\pi i+t)\,e^{-d(x_0+2\pi i+t)}\,dt\nonumber \\=&\frac{1}{2\pi i}\,\int_{0}^{\delta}\,(F_0(x_0+t)\,-\,F_0(x_0+2\pi i+t))\,e^{-d(x_0+t)}\,dt\nonumber \\=&\frac{1}{2\pi i}\,\int_{0}^{\delta}\,\left(\sum_{k=0}^{\infty}\,a_{k}\,t^{k/2}\,-\,\sum_{k=0}^{\infty}\,a_{k}t^{k/2}(-1)^k\right)\,e^{-d(x_0+t)}dt\nonumber \\
=&\sum_{ k=3}^{\infty}\,\frac{a_{2k-1}}{\pi i}\,\int_{0}^{\delta}\,t^{k-1/2}\,e^{-d(x_0+t)}\,dt\,=\,e^{-dx_0}\,\sum_{ k=3}^{\infty}\,\frac{a_{2k-1}}{\pi i} \,A(k,\delta,d),\end{align}
where $$A(k,\delta,d)\,=\,\int_{0}^{\delta}\,t^{k-1/2}\,e^{-dt}\,dt\,=\,d^{-k-1/2}\,\int_{0}^{d \delta}\,t^{k-1/2}\,e^{-t}\,dt.$$
Clearly, we have
$$0\,<\,A(k,\delta,d)\,<\,d^{-k-1/2}\,\int_{0}^{+\infty}\,t^{k-1/2}\,e^{-t}\,dt\,=\,\frac{\Gamma(k+1/2)}{d^{k+1/2}}$$
and
$$0\,<\,A(k+1,\delta,d)\,<\,\delta\, A(k,\delta,d).$$
Fix any $N\in \mathbb Z$ and $N>3$, we have for $ 3\leq k<N,$
\begin{align*}0&<\,\Gamma(k+1/2)\,-\,d^{k+1/2}A(k,\delta,d)\,=\,\int_{d\delta}^{+\infty}\,t^{k-1/2}\,e^{-t}\,dt\\ &\leq\, (d\delta)^{k-N}\,\int_{d\delta}^{+\infty}\,t^{N-1/2}e^{-t}\,dt\,<\,(d\delta)^{k-N}\,\Gamma(N+1/2),\end{align*}
We further estimate
\begin{align}\label{7}&\left|\sum_{ k=3}^{\infty}\,\frac{a_{2k-1}}{\pi i}\, A(k,\delta,d)\,-\,\sum_{ k=3}^{N-1}\,\frac{a_{2k-1}\Gamma(k+1/2)}{\pi id^{k+1/2}}\right|\nonumber \\ \leq& \,\sum_{ k=N}^{\infty}\,\frac{|a_{2k-1}|}{\pi } \,A(k,\delta,d)\,+\,\sum_{ k=3}^{N-1}\,\frac{|a_{2k-1}|(\Gamma(k+1/2)-d^{k+1/2}A(k,\delta,d))}{\pi d^{k+1/2}}\nonumber \\ \leq& \sum_{ k=N}^{\infty}\frac{|a_{2k-1}|}{\pi } \delta ^{k-N}A(N,\delta,d)+\sum_{ k=3}^{N-1}\frac{|a_{2k-1}|(d\delta)^{k-N}\Gamma(N+1/2)}{\pi d^{k+1/2}}\nonumber \\ \leq& \sum_{ k=N}^{\infty}\frac{|a_{2k-1}|\Gamma(N+1/2)}{\pi d^{N+1/2}} \delta ^{k-N}+\sum_{ k=3}^{N-1}\frac{|a_{2k-1}|\delta^{k-N}\Gamma(N+1/2)}{\pi d^{N+1/2}}\nonumber \\=&\,\frac{\Gamma(N+1/2)}{\pi d^{N+1/2}}\,\sum_{ k=3}^{\infty}\,|a_{2k-1}|\,\delta ^{k-N}\,=\,C_2\,\frac{\Gamma(N+1/2)}{\pi (d\delta)^{N+1/2}},\end{align}
where $C_2\, =\, \sum \limits_{ k=3}^{\infty}\,|a_{2k-1}|\,\delta ^{k+1/2}\,<\,+\infty$. It follows from \eqref{4}, \eqref{5}, \eqref{6} and \eqref{7} \begin{align*}\left|e^{dx_0}\,n_{0,d}\,-\,\sum_{ k=3}^{N-1}\,\frac{a_{2k-1}\Gamma(k+1/2)}{\pi id^{k+1/2}}\right|\,\leq\, C_1\,e^{-d\delta}\,+\,C_2\,\frac{\Gamma(N+{1}/{2})}{\pi (d\delta)^{N+\frac{1}{2}}}\,\leq\, \frac{C(N)}{(d\delta)^{N+\frac{1}{2}}}.\end{align*}
Hence, we can write
$$n_{0,d}\,=\,e^{-dx_0}\,\left(\sum_{ k=3}^{N-1}\,\frac{a_{2k-1}\Gamma(k+1/2)}{\pi id^{k+1/2}}\,+\,O(d^{-N-1/2})\right).$$
Since $n_{0,d}\in\RR$, so does $-i a_{2k-1}$.
Set
$$a_k^{0} \,=\,\frac{a_{2k-1}\Gamma(k+1/2)}{\pi i}.$$
In particular,
$$a_3^{0} \,=\,\dfrac{a_{5}\,\Gamma(7/2)}{\pi i}\,=\,\dfrac{4i^{-{5}}\,c_{1}'\,\Gamma(7/2)}{15\pi i}\,=\,-\dfrac{4c_{1}'\,\Gamma(7/2)}{15\pi}\,>\,0.$$
Thus we get the asymptotic expansion of $n_{0,d}$ as we stated in Theorem \ref{th:main}.

\section{Proof of Theorem \ref{th:main-2}}

We will adopt the notations from last section. First we define a generating function
\begin{equation}\label{eq:f1}
F_1(z)\,=\,\sum_{d=1}^{\infty}\,n_{1,d}\,e^{dz}.
\end{equation}
It follows from the Eguchi-Hori-Xiong recursion formula as shown in \cite{pande} that $F_1$ satisfies
\begin{equation}\label{8}
(27+2F_0'-3F_0'')\,F_1'\,=\,\frac{1}{8}\,(F_0'''-3F_0''+2F_0').
\end{equation}
By \eqref{4-1}, $ \widehat{w}'(t)>0$, so we have $c_1>0$, $c_1'>0$ and $a_5\neq 0$. Since $(27+2F_0'-3F_0'')(x_0)=0$ and $a_1=a_3=0$, by \eqref{2},  for any $z$ with $|z|<\delta_0$ and $0\leq \arg z<\frac{3}{2}\pi$, we have
$$(27+2F_0'-3F_0'')(x_0+z)\,=\,\sum_{d=1}^{\infty}\,\left(-\frac{3(d+4)(d+2)}{4}\,a_{d+4}+(d+2)\,a_{d+2}\right)\,z^{d/2}$$
and the expansion of $(F_0'''-3F_0''+2F_0')(x_0+z)$ is
$$\sum_{d=-1}^{\infty}\,\Big(\frac{(d+6)(d+4)(d+2)}{8}\,a_{d+6}\\
-\,\frac{3(d+4)(d+2)}{4}\,a_{d+4}+(d+2)\,a_{d+2}\Big)\,z^{d/2}.
$$
Therefore, we can write
$$F_1'(x_0+z)\,=\, \sum_{d=-2}^{\infty}\,a'_d\,z^{d/2},$$
where $a'_{-2}\,=\,- \,\frac{1}{48}$.
Hence, $F_1'$ is analytic in the region $\{\text{Re}\ z<x_0\}$. Since $(27-3F_0''+2F_0')(x_0+it)\,\neq\,0$ for $0<t<2\pi$, we can deduce from \eqref{8} 
that $F_1'$ can be analytically continued to $\{\text{Re}\ z<x_0+\delta_0,\ 0\leq\text{Im}\ z\leq 2\pi,\ z\neq x_0,x_0+2\pi i\}$ for some $0<\delta_0<1$.
Moreover, we have the following expansions: \begin{equation}\label{9}
F_1'(x_0+re^{i\theta})\,=\,\sum_{d=-2}^{\infty}\,a'_{d}\,r^{d/2}\,e^{i\theta d/2},\ \ \forall\ 0<r<\delta_0,\ 0\leq\theta\leq\pi,
\end{equation}
\begin{equation}\label{10}F_1'(x_0+2\pi i +re^{i\theta})\,=\,\sum_{d=-2}^{\infty}\,a'_{d}\,r^{d/2}
\,e^{i\theta d/2},\ \ \forall\ 0<r<\delta_0,\ \pi\leq\theta\leq{2}\pi.
\end{equation}
Fix $0<\delta<\delta_0$, using contour integration, we have for all $0<\delta_1<\delta$,
\begin{align}\label{11}d n_{1,d}\,=&\,\frac{1}{2\pi i}\,\int_{x_0-\delta_1}^{x_0-\delta_1+2\pi i}\,F_1'(z)\,e^{-dz}dz\nonumber\\
=&\,-\frac{\delta_1}{2\pi }\,\int_{0}^{\pi}\,F_1'(x_0+\delta_1e^{it})\,e^{-d(x_0+\delta_1e^{it})}\,e^{it}\,dt\nonumber \\
&+\,\frac{1}{2\pi i}\,\int_{\delta_1}^{\delta}\,F_1'(x_0+t)e^{-d(x_0+t)}\,dt\nonumber\\
&+\,\frac{1}{2\pi }\,\int_{0}^{2\pi}\,F_1'(x_0+\delta+it)\,e^{-d(x_0+\delta+it)}\,dt
\nonumber \\&-\,\frac{1}{2\pi i}\,\int_{\delta_1}^{\delta}\,F_1'(x_0+2\pi i+t)\,e^{-d(x_0+t)}\,dt\nonumber\\
&- \,\frac{\delta_1}{2\pi }\,\int_{\pi}^{2\pi}\,F_1'(x_0+2\pi i+\delta_1e^{it})\,e^{-d(x_0+\delta_1e^{it})}\,e^{it}\,dt .
\end{align}
It is easy to see
\begin{equation}\label{12}\left|\frac{1}{2\pi }\,\int_{0}^{2\pi}\,F_1'(x_0+\delta+it)\,e^{-d(x_0+\delta+it)}\,dt\right|
\,\leq \,C_1'\,e^{-d(x_0+\delta)},\end{equation}
where $C_1'\,=\,\max\limits_{0\leq t\leq 2\pi} |F_1'(x_0+\delta+it)|<+\infty$.
By \eqref{9} and \eqref{10}, as $\delta_1\to 0$, we get
\begin{align}\label{13}&\frac{1}{2\pi i}\,\int_{\delta_1}^{\delta}\,F_1'(x_0+t)e^{-d(x_0+t)}\,dt\,-\,\frac{1}{2\pi i}\,\int_{\delta_1}^{\delta}\,F_1'(x_0+2\pi i+t)\,e^{-d(x_0+t)}\,dt\nonumber \\=&\,\frac{1}{2\pi i}\,\int_{\delta_1}^{\delta}\,(F_1'(x_0+t)-F_1'(x_0+2\pi i+t))\,e^{-d(x_0+t)}\,dt\nonumber \\=&\,\frac{1}{2\pi i}\,\int_{\delta_1}^{\delta}\,\left(\sum_{k=-2}^{\infty}a'_{k}t^{k/2}\,-\,\sum_{k=-2}^{\infty}\,a'_{k} \,t^{k/2}(-1)^k\right)\,e^{-d(x_0+t)}\,dt
\nonumber \\=&\,\sum_{ k=0}^{\infty}\,\frac{a'_{2k-1}}{\pi i}\int_{\delta_1}^{\delta}\,t^{k-1/2}\,e^{-d(x_0+t)}\,dt\to e^{-d x_0}\sum_{ k=0}^{\infty}\,\frac{a'_{2k-1}}{\pi i} \,A(k,\delta,d),
\end{align}
\begin{align}\label{11}&-\frac{\delta_1}{2\pi }\,\int_{0}^{\pi}\,F_1'(x_0+\delta_1e^{it})\,e^{-d(x_0+\delta_1e^{it})} e^{it}\,dt\nonumber\\
&-\,\frac{\delta_1}{2\pi }\,\int_{\pi}^{2\pi}\,F_1'(x_0+2\pi i+\delta_1e^{it})\,e^{-d(x_0+\delta_1e^{it})}\,e^{it}\,dt\to\nonumber \\ & -\frac{1}{2\pi }\,\int_{0}^{\pi}\,a'_{-2}\,e^{-it}\,e^{-dx_0}\,e^{it}\,dt\,-\,\frac{1}{2\pi }\,\int_{\pi}^{2\pi}\,a'_{-2}\,e^{-it}\,e^{-dx_0}\,e^{it}\,dt\,=\,- a'_{-2}\,e^{-dx_0}.\nonumber
\end{align}
Here we have used the dominated convergence theorem. Then, arguing in a similar way as we did in the case of $n_{0,d}$, we can deduce $$d n_{1,d}\,=\,e^{-dx_0}\left(-a'_{-2}\,+\,\sum_{ k=0}^{N-1}\,\frac{a'_{2k-1}\Gamma(k+1/2)}{\pi i\,d^{k+1/2}}\,+\,O(d^{-N-1/2})\right).$$
Since $a'_{-2}\,=\,-\,\dfrac{1}{48}$, we have
$$n_{1,d}\,=\,e^{-dx_0}\left(\frac{1}{48d}\,+\,\sum_{ k=0}^{N-1}\,\frac{a'_{2k-1}\Gamma(k+1/2)}{\pi id^{k+3/2}}\,+\,O(d^{-N-3/2})\right).$$
Since $n_{1,d}\in\RR$, so does $-i a_{2k-1}'$.
Set
$$a_k^{1} \,=\,\frac{a_{2k-1}'\Gamma(k+1/2)}{\pi i}.$$
Thus we get the asymptotic expansion of $n_{1,d}$ as we stated in Theorem \ref{th:main-2}.

\end{document}